\def\R{\mathbb{R}}
\def\cA{\mathcal{A}}
\def\cI{\mathcal{I}}
\def\cN{\mathcal{N}}
\def\cO{\mathcal{O}}
\def\cS{\mathcal{S}}
\def\cT{\mathcal{T}}
\def\a{\alpha}
\def\b{\beta}
\def\d{\delta}
\def\l{\lambda}
\def\p{\partial}
\def\veps{\varepsilon}
\def\vphi{\varphi}
\def\O{\Omega}
\def\G{\Gamma}
\def\wto{\rightharpoonup}
\def\tu{\widetilde{u}}
\newcommand{\dv}[1]{\,{\mathrm d}#1}
\newcommand{\dual}[3][]{#1\langle #2,#3#1\rangle}
\newcommand{\wcheck}[1]{#1\hspace{-.8ex}\mbox{\huge {\lower.45ex \hbox{$\textstyle \check{}$}}} \hspace{.5ex}}
\DeclareMathOperator{\diver}{div}
\let\oldmarginpar\marginpar
\renewcommand\marginpar[1]{
  \oldmarginpar[\raggedleft\footnotesize #1]
  {\raggedright\footnotesize #1}}
\newtheorem{definition}{Definition}
\newtheorem{proposition}[definition]{Proposition}
\newtheorem{theorem}[definition]{Theorem}
\newtheorem{corollary}[definition]{Corollary}
\newtheorem{remarks}[definition]{Remarks}
\newtheorem{example}[definition]{Example}
\newtheorem{algorithm}[definition]{Algorithm}
\numberwithin{definition}{section}
\definecolor{tourquoise}{RGB}{0,170,180}	
\definecolor{darkred}{RGB}{238,34,34}		
\definecolor{darkgreen}{RGB}{0,190,0}		
\definecolor{lightgray}{RGB}{210,210,210}	
\definecolor{deepblue}{RGB}{0,0,240}		
\definecolor{darkgray}{RGB}{144,144,144}	
\definecolor{kingblue}{RGB}{64,96,224}		
\definecolor{gold}{RGB}{240,208,0}		
\definecolor{verydarkred}{RGB}{176,0,0}		
\def\tu{\widetilde{u}}
\def\pO{{\partial\O}}
\def\thick{\ell}
\def\id{\mathrm{id}}
\begin{document}

\title[Computation of optimally insulating films]{Numerical 
solution of a nonlinear eigenvalue problem arising in optimal 
insulation}

\author{S\"oren Bartels}
\author{Giuseppe Buttazzo} 

\date{\today}

\keywords{optimal insulation, symmetry breaking, numerical scheme}

\subjclass{35J25,49R05,65N12,65N25}

\begin{abstract}
The optimal insulation of a heat conducting body by a thin film of variable thickness can be formulated as a nondifferentiable, nonlocal eigenvalue problem. The discretization and iterative solution for the reliable computation of corresponding eigenfunctions that determine the optimal layer thickness are addressed. Corresponding numerical experiments confirm the theoretical observation that a symmetry breaking occurs for the case of small available insulation masses and provide insight in the geometry of optimal films. An experimental shape optimization indicates that convex bodies with one axis of symmetry have favorable insulation properties. 
\end{abstract}

\maketitle

\section{Introduction}\label{sintro}

Improving the mechanical properties of an elastic body by surrounding it by a thin reinforcing film of a different material is a classical and well understood problem in mathematical analysis~\cite{BrCaFr80,Frie80,AceBut86,CoKaUh99}. A recent result in~\cite{BuBuNi16} proves the surprising fact that in the case of a small amount of material for the surrounding layer, an unexpected break of symmetry occurs, i.e., a nonuniform arrangement on the surface of a ball leads to better material properties than a uniform one.

The case of the heat equation is similar, and a model reduction for thin films leads, in the long-time behavior, to a partial differential equation with Robin type boundary condition, e.g.,
\[
-\Delta u=f\text{ in }\O,\qquad \thick\,\p_n u+u=0\text{ on }\pO,
\]
i.e., the heat flux $-\p_n u$ trough the boundary is given by the temperature difference divided by the scaled nonnegative layer thickness $\thick$. A vanishing tickness thus leads to a homogenous Dirichlet boundary condition which prescribes the external temperature (set to zero), while as $\thick\to+\infty$ the boundary condition above approaches the Neumann one, corresponding to a perfect insulation. The long-time insulation properties (decay rate of the temperature) are determined by the principal eigenvalue of the corresponding differential operator, i.e., 
\[
\l_\thick=\inf\left\{\int_\O|\nabla u|^2\dv{x} + \int_\pO\thick^{-1}u^2\,\dv{s}\ :\ \int_\O u^2\dv{x}=1\right\}
\]
The optimality of the layer is characterized by minimality of 
$\l_\thick$ among admissible arrangements $\thick:\pO\to\R_+$ with prescribed total mass $m$, i.e., $\|\thick\|_{L^1(\pO)} = m$. Interchanging the minimization with respect to $\thick$ and $u$ leads to an explicit formula for the optimal $\thick$, through the nonlinear eigenvalue problem 
\[\begin{split}
\l_m&=\inf\left\{\l_\thick\ :\ \int_\pO\thick\,\dv{s}=m\right\}\\
&=\inf\left\{\int_\O|\nabla u|^2\dv{x} + \frac1m\Big(\int_\pO|u|\,\dv{s}\Big)^2\ :\ \int_\O u^2\dv{x}=1\right\}.
\end{split}\]
The calculation shows that 
optimal layers $\thick$ are proportional to traces of nonnegative
eigenfunctions $u_m$ on $\pO$. 

The results in~\cite{BuBuNi16} prove in a nonconstructive way that 
for the unit ball nonradial eigenfunctions exist if and only if
$0<m<m_0$, where~$m_0$ is a critical mass corresponding to the first nontrivial Neumann eigenvalue of the Laplace operator. In particular, 
the symmetry breaking occurs if and only if $\l_N<\l_m<\l_D$, where $\l_N$ and $\l_D$ denote the first (nontrivial) eigenvalues of the Laplacian with Neumann and Dirichlet boundary conditions.

While the proof of the result above implies that optimal nonradial
insulating films have to leave gaps (i.e. regions on $\pO$ where $\thick=0$) on the surface of the ball, the analysis does not characterize further properties such as symmetry or connectedness of the gaps. It is therefore desirable to gain insight of qualitative and quantitative features via accurate numerical
simulations.

Computing solutions for the nonlinear eigenvalue problem 
defining $\l_m$ is a challenging task since this requires solving a 
nondifferentiable, nonlocal, constrained minimization problem. 
To cope with these difficulties we adopt a gradient flow approach
of a suitable regularization of the minimization problem, i.e., 
we consider the evolution problem 
\[
(\p_t u, v) + (\nabla u,\nabla v) 
+ \frac1m \Big(\int_\pO |u|_\veps \dv{s} \Big) \int_\pO \frac{uv}{|u|_\veps} \dv{s}
= 0. 
\]
Here, $(\cdot,\cdot)$ denotes the $L^2$ inner product on $\O$. 
The regularized modulus is defined via
$|a|_\veps = (a^2+\veps^2)^{1/2}$ and the constraint
$\|u\|_{L^2}=1$ is incorporated via the conditions
\[
(\p_t u,u) = 0, \qquad (v,u) = 0.
\]
With the backward difference quotient $d_t u^k = (u^k-u^{k-1})/\tau$
for a step size $\tau>0$ we consider a semi-implicit discretization
defined by the sequence of problems that determines the sequence
$(u^k)_{k=0,1,\dots}$ for a given initial~$u^0$ recursively via
\[
(d_t u^k,v) + (\nabla u^k,\nabla v) + m^{-1} \Big(\int_\pO |u^{k-1}|_\veps \dv{s}\Big)
\int_\pO \frac{u^kv}{|u^{k-1}|_\veps} \dv{s} = 0,
\]
for all test functions~$v$ subject to the constraints
\[
(d_t u^k,u^{k-1}) = 0, \qquad (v,u^{k-1}) = 0.
\]
Note that every step only requires the solution of a constrained linear
elliptic problem. Crucial for this is the semi-implicit treatment of
the nondifferentiable and nonlocal boundary term and the normalization
constraint. We show that this time-stepping scheme is nearly
unconditionally energy decreasing in terms of $\tau$ and $\veps$
and that the constraint is approximated appropriately. The stability analysis
is related to estimates for numerical schemes for mean curvature and total 
variation flows investigated in~\cite{Dziu99,BaDiNo17-pre}. 

The spatial discretization of the minimization problem and the iterative
scheme require an appropriate numerical integration of the boundary 
terms. We provide a full error analysis for the use of a straightforward
trapezoidal rule avoiding unjustified regularity assumptions. This 
leads to a convergence rate for the approximation of $\l_m$ incorporating
both the mesh size $h>0$ and the regularization parameter $\veps>0$.
The good stability properties of the discrete gradient flow and
the accuracy of the spatial discretization are illustrated by means
of numerical experiments. These reveal that for moderate triangulations
with a few thousand elements a small number of iterations is sufficient
to capture the characteristic properties of solutions of the nonlinear
eigenvalue problem and thereby gain understanding in the features of
optimal nonsymmetric insulating films for the unit ball in two and 
three space dimensions. 

We also investigate the idea of improving the insulation properties by modifying the shape of a heat conducting body. In the case of two space dimensions we use a shape derivative and deform a given domain via a negative shape gradient obtained via appropriate Stokes problems. Corresponding numerical experiments confirm the observation
from~\cite{BuBuNi16} that the disk is not optimal when the total amount $m$ of insulating material is small and that instead convex domains with one axis of symmetry lead to smaller principal eigenvalues. 

In three or more space dimensions the situation is more complex: indeed an optimal shape does not exist. In fact, if $\O$ is composed of a large number $n$ of small disjoint balls of radius $r_n\to0$ we may define
\[u=
\begin{cases}
1&\hbox{on one of the balls}\\
0&\hbox{on the remaining ones}
\end{cases}\]
and we obtain, if $B$ is the ball where $u=1$,
\[
\l_m(\O)\le\frac{\frac1m(|\partial B|)^2}{|B|}=\frac{d^2\omega_d}{m}r_n^{d-2}\;,
\]
where $\omega_d$ denotes the Lebesgue measure of the unit ball in $\R^d$. If $d\ge3$ we then obtain that $\l_m(\O)$ may be arbitrarily close to zero. Nevertheless, starting with a ball as the initial domain and performing a shape variation among rotational bodies, we numerically identify ellipsoids and egg-shaped domains that have good insulation properties.

In spite of the nonexistence argument above, it is desirable to prove (or disprove) that an optimal domain exists in a restricted class, as for instance the class of convex domains. The numerical experiments of Section~\ref{sec:shape_opt} indicate that convex bodies are optimal among rotational bodies, which is a good sign for the existence of an optimal body among convex ones.

The article is organized as follows. In Section~\ref{sec:eigval_prob} 
we outline the derivation of the nonlinear eigenvalue problem. In 
Section~\ref{sec:grad_flow} we investigate the stability properties
of the semi-implicit discretization of the gradient flow used 
as an iterative scheme for computing eigenfunctions. 
Section~\ref{sec:spatial} is devoted to the analysis of the spatial
discretization of the problem. Experiments confirming the stability
and approximation properties and revealing the qualitative and
quantitative properties of optimal insulating films are presented
in Section~\ref{sec:num_ex}. In Section~\ref{sec:shape_opt} we
experimentally investigate the numerical optimization of the
shape of insulated conducting bodies.

\section{Nonlinear eigenvalue problem}\label{sec:eigval_prob}

We consider a heat conducting body $\O\subset\R^d$ that is surrounded by an insulating material of variable normal thickness $\veps\thick \ge 0$, cf.~Fig.~\ref{fig:insulating_body}. A model reduction for vanishing conductivity $\veps\to0$ in the insulating layer leads, for the stationary temperature $u$ under the action of heat sources $f$, to an elliptic partial differential equation, with Robin type boundary condition, i.e.,
\[
-\Delta u = f \mbox{ in }\O, \qquad 
\thick \, \p_n u + u = 0 \mbox{ on }\pO,
\]
cf.~\cite{BrCaFr80,AceBut86} for details. The boundary condition states that the heat flux through the boundary is given by the temperature difference divided by thickness of the insulating material. 

\begin{figure}[h]
\input{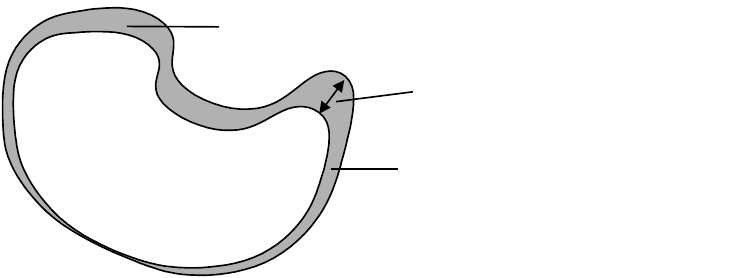_t}
\caption{Conducting body surrounded by insulating material of variable normal thickness $\veps\thick\ge0$.}
\label{fig:insulating_body}
\end{figure}

It has to be noticed that the only interesting case occurs when the conductivity and the thickness of the insulating material have the same order of magnitude. Indeed, if conductivity is significantly smaller than thickness the limit problem is the Neumann one, while in the converse situation one obtains the Dirichlet problem.

The optimization of the thickness of the thin insulating layer, once the total amount of insulator $\|\thick\|_{L^1}$ is prescribed, is illustrated in \cite{BuBuNi16}. Here we deal with the case when no heat source is present, so that the temperature $u$, starting from its initial datum, tends to zero as $t\to+\infty$. It is well known that the temperature decays exponentially in time at a rate given by the first eigenvalue $\l_\thick$ of the differential operator $\cA_\thick$ defined by
\[
\dual{\cA_\thick u}{v} = \int_\O \nabla u \cdot \nabla v\,\dv{x}
 + \int_\pO \thick^{-1} u v\,\dv{s},\qquad u,v\in H^1(\O).
\]
We have then
\[
\l_\thick = \min\left\{\dual{\cA_\thick u}{u}\ :\ \int_\O|u|^2\dv{x}=1\right\}
\]
and, if we look for the distribution of insulator around $\O$ which gives the slowest decay in time, we have to solve the optimization problem
\[
\min\left\{\l_\thick\ :\ \int_\pO\thick\dv{s} = m\right\},
\]
where $m$ represents the total amount of insulator at our disposal.

Since $\l_\thick$ is given by a minimum too, we may interchange the minima over $\thick$ and $u$ obtaining that for a given $u\in H^1(\O)$ the optimal $\thick$ is such that $u^2/\thick^2$ is constant on $\pO$ and hence given by
\[
\thick(z) = \frac{m |u(z)|}{\int_\pO|u|\,\dv{s}}\qquad\hbox{for }z\in\pO.
\]
An optimal thickness $\thick$ is thus directly obtained from a solution
of the nonlinear eigenvalue problem
\[
\l_m=\min\left\{J_m(u)\ :\ \int_\O|u|^2\dv{x}=1\right\}
\]
where $J_m$ is the functional defined on $H^1(\O)$ by
\[
J_m(u)=\int_\O|\nabla u|^2\dv{x} + \frac1m\Big(\int_\pO|u|\,\dv{s}\Big)^2.
\]
The mapping $m\mapsto\l_m$ is a continuous and strictly decreasing function with the asymptotic values
\[
\lim_{m\to0}\l_m=\l_D,\qquad\lim_{m\to\infty}\l_m=0,
\]
which represent the first Dirichlet and Neumann eigenvalues of the Laplacian.

When $\O$ is a ball, denoting by
$$\l_N=\min\left\{\int_\O|\nabla u|^2\dv{x}\ :\ \int_\O u^2\dv{x}=1,\ \int_\O u\dv{x}=0\right\}$$
the first nontrivial Neumann eigenvalue, we have $0<\l_N<\l_D$ 
and there exists $m_0>0$ such that (cf.~Fig.~\ref{fig:plotlambdam})
$\l_{m_0} = \l_N$. 

\begin{figure}[h]
\input{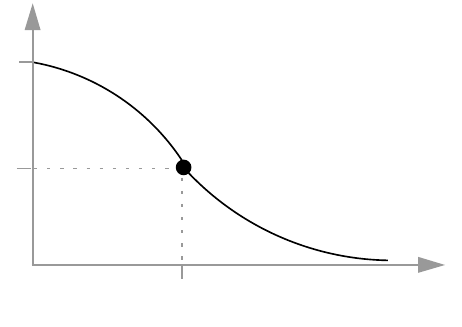_t}
\caption{\label{fig:plotlambdam} Dependence of the principal eigenvalue 
$\l_m$ of $\cA_\thick$ on available mass $m>0$ in the case of a ball. }
\end{figure}

We summarize below the main result concerning this break of symmetry and 
refer to~\cite{BuBuNi16} for details. 

\begin{theorem}[\cite{BuBuNi16}]
Let $\O$ be a ball. If $m>m_0$ every solution $u_m$ of the minimization problem defining $\l_m$ is radial, hence the optimal thickness $\thick$ of the insulating film around $\O$ is constant. On the contrary, if $0<m<m_0$ the solution $u_m$ is not radial and so the optimal thickness $\thick$ is not constant.
\end{theorem}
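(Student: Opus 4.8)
The plan is to decide, in each of the two mass regimes, whether a minimizer $u_m$ of $J_m$ under $\int_\O u^2\dv{x}=1$ can be radial; the conclusions on $\thick$ then follow from $\thick(z)=m|u_m(z)|/\!\int_\pO|u_m|\dv{s}$. The basic device is to split an admissible $u$ into its radial part $\ou$, the average of $u$ over each concentric sphere, and the remainder $w=u-\ou$, which has vanishing average over every sphere, so $\int_\O w\dv{x}=0$ and $\int_\pO w\dv{s}=0$. Expanding in spherical harmonics on the ball makes $u=\ou+w$ orthogonal both in $L^2(\O)$ and in the Dirichlet form, and since $\ou$ is constant on $\pO$ one has $\int_\pO|u|\dv{s}\ge\bigl|\int_\pO u\dv{s}\bigr|=\int_\pO|\ou|\dv{s}$; hence replacing $u$ by $\ou$ never increases $J_m$ except through the gradient energy of $w$. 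I would also use two standard facts: a minimizer exists (direct method: $u\mapsto\int_\pO|u|\dv{s}$ is weakly continuous on $H^1(\O)$ by compactness of the trace, and $\int_\O|\nabla\cdot|^2\dv{x}$ is weakly lower semicontinuous), and on a ball $\l_N$ is attained by a non-radial first-mode Neumann eigenfunction having vanishing average over every concentric sphere.

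For $m>m_0$ one has $\l_m<\l_{m_0}=\l_N$ by strict monotonicity of $m\mapsto\l_m$. Let $u_m$ be a minimizer and set $t=\int_\O w^2\dv{x}$; the case $t=1$ is ruled out at once since then $J_m(u_m)\ge\int_\O|\nabla w|^2\dv{x}\ge\l_N\int_\O w^2\dv{x}=\l_N>\l_m$. For $t<1$, combining the orthogonal splitting with the Poincaré inequality $\int_\O|\nabla w|^2\dv{x}\ge\l_N\int_\O w^2\dv{x}$ (immediate from $\int_\O w\dv{x}=0$ and the variational characterization of $\l_N$) and the quadratic homogeneity $J_m(cu)=c^2J_m(u)$ gives
\[
\l_m=J_m(u_m)\ge J_m(\ou)+\l_N\, t=(1-t)\,J_m\!\bigl(\ou/\sqrt{1-t}\bigr)+\l_N\, t\ge(1-t)\,\l_m+\l_N\, t ,
\]
so that $t(\l_N-\l_m)\le0$; since $\l_N>\l_m$ this forces $t=0$, i.e. $u_m=\ou$ is radial, and $|u_m|$ and $\thick$ are constant on $\pO$.

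For $0<m<m_0$ one has $\l_m>\l_{m_0}=\l_N$, and the plan is to show no radial function attains $\l_m$. Assume a radial minimizer $\ou$ exists. Since $|\ou|$ has the same value of $J_m$, we may take $\ou\ge0$; testing minimality against variations supported in $\O$ yields $-\Delta\ou=\l_m\ou$ in $\O$, so $\ou$ is a nonnegative nontrivial supersolution, hence positive in $\O$, and if it vanished on $\pO$ it would be a radial Dirichlet eigenfunction with eigenvalue $\l_m<\l_D$, which is impossible — so $\ou>0$ on $\pO$. Let $\phi$ be the non-radial first-mode Neumann eigenfunction above, normalized by $\int_\O\phi^2\dv{x}=1$; by orthogonality of the spherical modes $\int_\O\ou\phi\dv{x}=0$, $\int_\O\nabla\ou\cdot\nabla\phi\dv{x}=0$, $\int_\pO\phi\dv{s}=0$, while $\int_\O|\nabla\phi|^2\dv{x}=\l_N$. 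For small $|t|$ the $L^2$-normalized family $u_t=\cos t\,\ou+\sin t\,\phi$ is strictly positive on $\pO$, so $\int_\pO|u_t|\dv{s}=\cos t\int_\pO\ou\dv{s}$ and
\[
J_m(u_t)=\cos^2 t\,\Bigl(\int_\O|\nabla\ou|^2\dv{x}+\tfrac1m\bigl(\textstyle\int_\pO\ou\dv{s}\bigr)^2\Bigr)+\sin^2 t\,\l_N=\cos^2 t\,\l_m+\sin^2 t\,\l_N<\l_m
\]
for $t\ne0$, contradicting $\l_m=\min J_m$. Thus every minimizer is non-radial; moreover $|u_m|$ cannot be constant on $\pO$, for otherwise $u_m\ge0$ would solve $-\Delta u_m=\l_m u_m$ in $\O$ with constant boundary datum, which (as $\l_m<\l_D$) has a unique and hence radial solution — a contradiction. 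Therefore $\thick$ is non-constant.

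The genuine difficulty is the non-differentiability and nonlocality of the term $\tfrac1m(\int_\pO|u|\dv{s})^2$. In the case $m>m_0$ this is avoided altogether, because only the triangle inequality for $\int_\pO|\cdot|\dv{s}$ and orthogonal decompositions are used and nothing is differentiated. In the case $m<m_0$ it is neutralized by the strict positivity of the radial minimizer up to $\pO$: along $u_t$ the modulus is inactive for small $t$ and the boundary term scales exactly as $\cos^2 t$, so the variation is elementary. The supporting facts — existence of a minimizer and the classical description of the lowest nonzero Neumann eigenvalue of a ball (degree-one sector, non-radial eigenfunction) — would simply be quoted, e.g. from \cite{BuBuNi16} and references therein.
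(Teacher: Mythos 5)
The paper does not prove this theorem --- it is quoted from \cite{BuBuNi16} without proof --- but your argument is correct and follows essentially the strategy of that reference: the spherical-harmonic splitting $u=\ou+w$ with the triangle inequality on the boundary term for $m>m_0$, and the perturbation of a putative positive radial minimizer by the first nontrivial Neumann mode for $m<m_0$. You also correctly supply the extra step (ruling out a non-radial minimizer with constant boundary modulus via uniqueness for $-\Delta v=\l_m v$, $v=c$ on $\pO$, $\l_m<\l_D$) needed to pass from ``$u_m$ is not radial'' to ``$\thick$ is not constant.''
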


The proof of the theorem also shows that nonuniform optimal insulations $\ell$
leave gaps, i.e., the support of an optimal $\ell$ is a strict subset of $\p\O$.  


\section{Iterative minimization}\label{sec:grad_flow}

We aim at iteratively minimizing the regularized functional
\[
J_{m,\veps}(u) = \|\nabla u\|^2 + \frac1m \|u\|_{L^1_\veps(\pO)}^2
\]
among functions $u\in H^1(\O)$ with $\|u\|^2=1$ and with the 
regularized $L^1$ norm defined via the regularized modulus
\[
|a|_\veps = (a^2+\veps^2)^{1/2}.
\]
Minimizers satisfy the eigenvalue equation 
\[
(\nabla u,\nabla v) 
+ \frac1m \|u\|_{L^1_\veps(\pO)} \int_\pO \frac{u v}{|u|_\veps} \dv{s} = \l_m (u,v)
\]
for all $v\in H^1(\O)$. To define an iterative scheme we consider
the corresponding evolution problem which seeks for given $u_0\in H^1(\O)$
a family $u:[0,T] \to H^1(\O)$ with $u(0)=u_0$, $\|u(t)\|^2 = 1$ for
all $t\in [0,T]$, and
\[
(\p_t u,v)_* + (\nabla u,\nabla v) 
+ \frac1m \|u\|_{L^1_\veps(\pO)} \int_\pO \frac{u v}{|u|_\veps} \dv{s} = \l_m (u,v)
\]
for all $t\in (0,T]$ and $v\in H^1(\O)$. Here, $(\cdot,\cdot)_*$ is an 
appropriate inner product defined on $H^1(\O)$. Noting that $(\p_t u, u)=0$
it suffices to restrict to test functions $v\in H^1(\O)$ with 
$(u,v)=0$ so that the right-hand side with the unknown multiplier $\l_m$ 
disappears. Replacing the time derivative by a backward difference quotient
and discretizing the nonlinear boundary term and the constraint
$(\p_t u,u)=0$ semi-implicitly to obtain
linear problems in the time steps leads to the following numerical scheme.

\begin{algorithm} \label{alg:semiimpl}
Let $\veps,\tau>0$ and $u_0\in H^1(\O)$
with $\|u_0\|^2=1$; set $k=1$. \\
(1) Compute $u^k \in H^1(\O)$ such that $(d_t u^k,u^{k-1}) = 0$ and 
\[
(d_t u^k,v)_* + (\nabla u^k,\nabla v) 
+ \frac1m \|u^{k-1}\|_{L^1_\veps(\pO)} 
\int_\pO \frac{u^k v}{|u^{k-1}|_\veps} \dv{s} = 0
\]
for all $v\in H^1(\O)$ with $(v,u^{k-1})=0$. \\
(2) Stop if $\|d_t u^k\|_*\le \veps_{\rm stop}$;
increase $k\to k+1$ and continue with~(1) otherwise. 
\end{algorithm}

The iterates of Algorithm~\ref{alg:semiimpl} approximate the continuous
evolution equation
and satisfy an approximate energy estimate on finite intervals $[0,T]$. 

\begin{proposition}\label{prop:stability}
Assume that the induced norm $\|\cdot\|_*$ on $H^1(\O)$ is such that 
we have the trace inequality
\[
(1+\veps) \|v\|_{L^1(\pO)}^2 \le c_{\rm Tr}^2 \|v\|_*^2 + \|\nabla v\|^2
\]
for some constant $c_{\rm Tr}>0$ and all $v \in H^1(\O)$. Then 
Algorithm~\ref{alg:semiimpl} is energy-decreasing in the sense
that for every $K=0,1,\dots,\lfloor T/\tau \rfloor$ we have 
\[
J_{m,\veps}(u^K) + 
2 \Big(1-\frac{c_{\rm Tr}^2 \tau}{2m}\Big) \tau \sum_{k=1}^K \|d_t u^k\|_*^2
\le J_{m,\veps}(u^0) + \frac{\veps}{m} T (1+\veps)|\p\O|^2. 
\]
Moreover, if $\|u^0\|^2 = 1$ we have that
\[
\|u^K\|^2 = 1 + \tau^2 \sum_{k=1}^K \|d_t u^k\|^2,
\]
i.e., $\|u^K\|\ge 1$ and if $\|v\|\le c_* \|v\|_*$ for all $v\in H^1(\O)$ then 
$\big|\|u^K\|^2-1\big| \le c_{\rm U} \tau$. 
\end{proposition}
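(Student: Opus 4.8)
The plan is to test the $k$-th equation of Algorithm~\ref{alg:semiimpl} with $v=d_t u^k$: this is admissible because the discrete constraint $(d_t u^k,u^{k-1})=0$ is precisely the orthogonality $(v,u^{k-1})=0$ required of test functions. The $*$-term then gives $\|d_t u^k\|_*^2$. For the Dirichlet term I would use the binomial identity $(\nabla u^k,\nabla(u^k-u^{k-1}))=\tfrac12\|\nabla u^k\|^2-\tfrac12\|\nabla u^{k-1}\|^2+\tfrac12\|\nabla(u^k-u^{k-1})\|^2$. The nonlocal boundary term is the heart of the argument: since $t\mapsto|t|_\veps$ is convex one has the pointwise inequality $\frac{u^k(u^k-u^{k-1})}{|u^{k-1}|_\veps}\ge|u^k|_\veps-|u^{k-1}|_\veps$, as follows from the algebraic identity $\frac{a(a-b)}{|b|_\veps}=(|a|_\veps-|b|_\veps)+\frac{(|a|_\veps-|b|_\veps)^2}{2|b|_\veps}+\frac{(a-b)^2}{2|b|_\veps}$; integrating over $\pO$, multiplying by the nonnegative weight $\|u^{k-1}\|_{L^1_\veps(\pO)}$, and applying the scalar binomial identity converts this term into $\tfrac1{2m}\|u^k\|_{L^1_\veps}^2-\tfrac1{2m}\|u^{k-1}\|_{L^1_\veps}^2$ minus the quadratic remainder $\tfrac1{2m}\big(\|u^k\|_{L^1_\veps}-\|u^{k-1}\|_{L^1_\veps}\big)^2$.

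Collecting the three terms, multiplying by $2\tau$ and recognising $J_{m,\veps}(u^k)-J_{m,\veps}(u^{k-1})$, I am left with the single remainder $\tfrac1m\big(\|u^k\|_{L^1_\veps}-\|u^{k-1}\|_{L^1_\veps}\big)^2$ on the right. Since $|\cdot|_\veps$ is $1$-Lipschitz and $0\le|t|_\veps-|t|\le\veps$, this difference is controlled by $\|u^k-u^{k-1}\|_{L^1(\pO)}$ up to an $O(\veps|\pO|)$ discrepancy; squaring with Young's inequality produces the per-step error $\tfrac\veps m(1+\veps)|\pO|^2$. Applying the assumed trace inequality to $v=u^k-u^{k-1}=\tau d_t u^k$ absorbs $\|u^k-u^{k-1}\|_{L^1(\pO)}^2$: the $c_{\rm Tr}^2\tau^2\|d_t u^k\|_*^2$ part is what degrades the dissipation coefficient from $2\tau$ to $2\tau\big(1-\tfrac{c_{\rm Tr}^2\tau}{2m}\big)$, while the $\|\nabla(u^k-u^{k-1})\|^2$ part is absorbed by the gradient-difference term from the Dirichlet identity. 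Summing over $k=1,\dots,K$ telescopes $J_{m,\veps}$ and the per-step errors, and $K\tau\le T$ gives the stated inequality.

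The hard part will be precisely this last bookkeeping: arranging that the $\|\nabla(u^k-u^{k-1})\|^2$ generated by the trace estimate is dominated by the one available on the left, and that the $\veps$- and $(1+\veps)$-factors assemble into $\tfrac\veps m T(1+\veps)|\pO|^2$ without spurious $\tau$- or $m$-dependence. I would note in passing that if one retains the nonnegative pointwise remainder $\int_\pO\frac{(|u^k|_\veps-|u^{k-1}|_\veps)^2}{2|u^{k-1}|_\veps}\,\dv s$ discarded above and bounds it below by $\|u^{k-1}\|_{L^1_\veps}^{-1}\big(\|u^k\|_{L^1_\veps}-\|u^{k-1}\|_{L^1_\veps}\big)^2$ via Cauchy--Schwarz, it cancels the quadratic remainder exactly and one even obtains an unconditional decrease; the weaker inequality with the $c_{\rm Tr}$- and $\veps$-terms is what is recorded here.

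For the normalisation statement I would expand $\|u^k\|^2=\|u^{k-1}+\tau d_t u^k\|^2=\|u^{k-1}\|^2+2\tau(u^{k-1},d_t u^k)+\tau^2\|d_t u^k\|^2$ and use $(u^{k-1},d_t u^k)=0$ to get $\|u^k\|^2=\|u^{k-1}\|^2+\tau^2\|d_t u^k\|^2$; telescoping with $\|u^0\|^2=1$ yields $\|u^K\|^2=1+\tau^2\sum_{k=1}^K\|d_t u^k\|^2\ge1$. Finally, if $\|v\|\le c_*\|v\|_*$ then $\|u^K\|^2-1=\tau^2\sum_{k=1}^K\|d_t u^k\|^2\le c_*^2\,\tau\big(\tau\sum_{k=1}^K\|d_t u^k\|_*^2\big)$, and for $\tau$ small enough that $1-\tfrac{c_{\rm Tr}^2\tau}{2m}\ge\tfrac12$ the energy inequality bounds $\tau\sum_{k=1}^K\|d_t u^k\|_*^2$ by a constant depending only on $J_{m,\veps}(u^0)$, $T$, $\veps$, $m$ and $|\pO|$, whence $\big|\,\|u^K\|^2-1\,\big|\le c_{\rm U}\tau$.
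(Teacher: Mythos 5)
Your proof is correct and follows essentially the same route as the paper's: testing with $v=d_t u^k$ (admissible by the constraint), the binomial identity for the gradient term, the pointwise algebraic identity for $a(a-b)/|b|_\veps$ (which is exactly the paper's discrete chain rule for $d_t|u^k|_\veps$), the scalar binomial identity for the product of the regularized $L^1$ norms, Young's inequality producing the $\veps$-dependent per-step error, absorption via the assumed trace inequality, telescoping, and the orthogonality expansion for $\|u^K\|^2$. Your closing aside—that retaining the discarded pointwise remainder and applying Cauchy--Schwarz cancels the quadratic remainder and yields an unconditional decrease—is a valid sharpening not recorded in the paper.
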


\begin{proof}
Choosing $v=d_t u^k$ in Step~(1) of Algorithm~\ref{alg:semiimpl} shows that
\[\begin{split}
\|d_t u^k\|_*^2
& + \frac12 d_t \|\nabla u^k\|^2 + \frac{\tau}{2} \|\nabla d_t u^k\|^2\\
& + \frac1m \|u^{k-1}\|_{L^1_\veps(\pO)}
\int_\pO \frac{(1/2) \big(d_t |u^k|^2 + \tau |d_t u^k|^2\big)}{|u^{k-1}|_\veps}
\dv{s} = 0.
\end{split}\]
We expect the last term on the left-hand side to be related to
a discrete time derivative of the square of the regularized $L^1$ norm on 
the boundary. To verify this, we note that we have
\[\begin{split}
d_t |u^k|_\veps &= d_t \frac{|u^k|_\veps^2}{|u^k|_\veps} 
= \frac{d_t |u^k|_\veps^2}{|u^{k-1}|_\veps} 
- \frac{|u^k|_\veps d_t |u^k|_\veps}{|u^{k-1}|_\veps} 
= \frac{(1/2) \big(d_t |u^k|_\veps^2 - \tau |d_t|u^k|_\veps|^2\big)}{|u^{k-1}|_\veps}.
\end{split}\]
Using that $d_t |u^k|_\veps^2 = d_t |u^k|^2$ we combine the two identities
to verify that
\[\begin{split}
& \|d_t u^k\|_*^2
+ \frac12 d_t \|\nabla u^k\|^2 + \frac{\tau}{2} \|\nabla d_t u^k\|^2 
+ \frac1m \|u^{k-1}\|_{L^1_\veps(\pO)} d_t \|u^k\|_{L^1_\veps(\pO)} \\
& + \frac{\tau}{2m} \|u^{k-1}\|_{L^1_\veps(\pO)} 
\int_\pO \frac{|d_t|u^k|_\veps|^2 + |d_t u^k|^2}{|u^{k-1}|_\veps} \dv{s} = 0.
\end{split}\]
Note that the last term on the left-hand side is non-negative. 
We use that 
\[
\|u^{k-1}\|_{L^1_\veps(\pO)} d_t \|u^k\|_{L^1_\veps(\pO)} 
= \frac12 d_t \|u^k\|_{L^1_\veps(\pO)}^2 
- \frac{\tau}{2} \|d_t u^k\|_{L^1_\veps(\pO)}^2
\] 
to deduce 
\[\begin{split}
\|d_t u^k\|_*^2 & 
+ \frac12 d_t \big(\|\nabla u^k\|^2 + \frac{1}{m} \|u^k\|_{L^1_\veps(\pO)}^2\big) 
+ \frac{\tau}{2} \|\nabla d_t u^k\|^2 
\le \frac{\tau}{2m} \|d_t u^k\|_{L^1_\veps(\pO)}^2 \\
& \le (1+\veps) \frac{\tau}{2m} \|d_t u^k\|_{L^1(\pO)}^2 
+ \frac{\tau}{2m}\big( \veps + \veps^2 \big)|\pO|^2.
\end{split}\]
The condition that 
\[
(1+\veps)\|v\|_{L^1(\pO)}^2 \le c_{\rm Tr}^2 \|v\|_*^2 + \|\nabla v\|^2
\]
and a summation over $k=1,2,\dots,K$ imply the stability estimate. 
We further note that due to the orthogonality $(d_t u^k,u^k)=0$ we have
\[
\|u^k\|^2 = \|u^{k-1}\|^2 + \tau^2 \|d_t u^k\|^2 
\]
and an inductive argument yields the second asserted estimate.
\end{proof}

\begin{remarks}
(i) If $\|\cdot\|_*$ is the norm in $H^1(\O)$ then the continuity of
the trace operator implies the assumption. In case of the $L^2$ norm
it depends on the geometry of $\O$ via the operator norm of the trace
operator. \\
(ii) The iterates of Algorithm~\ref{alg:semiimpl} approximate an 
eigenfunction and since the values of $J_m$ are (nearly) decreasing we
expect to approximate $\l_m$ since other eigenvalues
correspond to unstable stationary configurations. \\
(iii) Since $\|u^K\|\ge 1$ we may normalize $u^K$ and obtain an approximation
of $\l_m$ via $J_m(\tu^K)$ with $\tu^K = u^K/\|u^K\|$ even if $\tau = \cO(1)$.
\end{remarks}

\section{Spatial discretization}\label{sec:spatial}

Given a regular triangulation $\cT_h$ of $\O$ with maximal mesh-size $h>0$
we consider the minimization
of $J_{m,\veps}$ in the finite element space
\[
\cS^1(\cT_h) = \big\{v_h\in C(\overline{\O}): v_h|_T \in P_1(T) 
\mbox{ for all } T\in \cT_h\big\}.
\]
For a direct implementability we include quadrature by considering 
the functional
\[
J_{m,\veps,h}(u_h) = \|\nabla u_h\|^2 + \frac1m \|u_h\|_{L^1_{\veps,h}(\pO)}^2
\]
with the discretized and regularized $L^1$ norm
\[
\|u_h\|_{L^1_{\veps,h}(\pO)} = \int_\pO \cI_h |u_h|_\veps \dv{s}
= \sum_{z\in \cN_h \cap \pO} \b_z |u_h(z)|_\veps.
\]
Here, $\cI_h:C(\overline{\O})\to \cS^1(\cT_h)$ is the nodal
interpolation operator and 
$\cN_h$ is the set of nodes in $\cT_h$ so that we have
\[
\b_z = \int_\pO \vphi_z \dv{s}
\]
for the nodal basis function $\vphi_z$ associated with $z\in \cN_h$. 
It is a straightforward task to show that the result of 
Proposition~\ref{prop:stability} carries over to the spatially
discretized functional $J_{m,\veps,h}$. The following proposition
determines the approximation properties of the discretized
functional.

\begin{proposition}\label{prop:spatial_appr}
Assume that there exists a minimizer $u\in H^2(\O)$ for $J_m$ with
$\|u\|^2=1$. We then have 
\[
0\le \min_{u_h\in \cS^1(\cT_h)} J_m(u_h) - J_m(u) \le c h \|u\|_{H^2(\O)}^2.
\]
Moreover, if $\cT_h$ is quasiuniform then for every $u_h\in \cS^1(\cT_h)$ 
we have with $\a\ge 1/2$
\[
\big| J_{m,\veps,h}(u_h) - J_m(u_h) \big| \le c (\|u_h\|_{H^1(\O)} +1)^2
(\veps + h^\a). 
\]
If $u_h$ is uniformly $H^1$-regular on the boundary, i.e., if 
$\|\nabla u_h \|_{L^2(\pO)} \le c$ for all $h>0$, then we have $\a\ge 1$.
\end{proposition}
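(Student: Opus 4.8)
The proof naturally splits into the three asserted estimates, which I would tackle in order.

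\medskip

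\textbf{Step 1: the $\cO(h)$ bound for the best approximation error.} The lower bound $0\le\min_{u_h}J_m(u_h)-J_m(u)$ is immediate since $u$ minimizes $J_m$ over all of $H^1(\O)$, which contains $\cS^1(\cT_h)$, and $\int_\O|u_h|^2\dv{x}$ need not equal one (so one must be slightly careful: it is really $J_m$ evaluated on $\cS^1(\cT_h)$ without the normalization constraint that is being compared, and $J_m\ge\l_m$ on the unit sphere — here I would read the claim as comparing unconstrained values, or equivalently normalize and absorb the lower-order correction). For the upper bound I would take $u_h=\cI_h u$, the nodal interpolant, which by standard interpolation estimates for $H^2$ functions satisfies $\|\nabla(u-\cI_h u)\|\le c h\|u\|_{H^2(\O)}$ and $\|u-\cI_h u\|_{L^2(\pO)}\le c h^{3/2}\|u\|_{H^2(\O)}$ (using $H^2(\O)\hookrightarrow H^{3/2}(\pO)$). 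Then expand $J_m(\cI_h u)-J_m(u)$: the Dirichlet part contributes $\|\nabla\cI_h u\|^2-\|\nabla u\|^2$, bounded by $(2\|\nabla u\|+\|\nabla(u-\cI_h u)\|)\|\nabla(u-\cI_h u)\|\le c h\|u\|_{H^2}^2$; the boundary part $\tfrac1m(\|\cI_h u\|_{L^1(\pO)}^2-\|u\|_{L^1(\pO)}^2)$ is handled the same way using $\big|\|\cI_h u\|_{L^1(\pO)}-\|u\|_{L^1(\pO)}\big|\le\|u-\cI_h u\|_{L^1(\pO)}\le|\pO|^{1/2}\|u-\cI_h u\|_{L^2(\pO)}\le c h^{3/2}\|u\|_{H^2}$, which is even better than $\cO(h)$.

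\medskip

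\textbf{Step 2: the quadrature error $\veps+h^\a$.} Here the difference $J_{m,\veps,h}(u_h)-J_m(u_h)$ is purely a boundary term, namely $\tfrac1m\big(\|u_h\|_{L^1_{\veps,h}(\pO)}^2-\|u_h\|_{L^1(\pO)}^2\big)$, since the Dirichlet parts coincide. Factoring the difference of squares and bounding $\|u_h\|_{L^1_{\veps,h}(\pO)}+\|u_h\|_{L^1(\pO)}\le c(1+\|u_h\|_{H^1(\O)})$ via the trace inequality (and $\veps|\pO|$ from the regularization), it remains to estimate $\big|\|u_h\|_{L^1_{\veps,h}(\pO)}-\|u_h\|_{L^1(\pO)}\big|$. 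I would split this through the intermediate quantity $\int_\pO|u_h|_\veps\dv{s}$: the regularization error $\big|\int_\pO|u_h|_\veps\dv{s}-\int_\pO|u_h|\dv{s}\big|\le\veps|\pO|$ is trivial since $0\le|a|_\veps-|a|\le\veps$; the genuine quadrature error $\big|\int_\pO\cI_h|u_h|_\veps\dv{s}-\int_\pO|u_h|_\veps\dv{s}\big|$ is the interpolation error of the function $|u_h|_\veps$ restricted to $\pO$. The subtlety is that $|u_h|_\veps$ is only Lipschitz, not $C^2$, so one cannot use the $\cO(h^2)$ nodal-interpolation estimate. Instead I would use the $W^{1,1}$ (or $W^{1,2}$) interpolation estimate on each boundary edge $E$: $\|(\id-\cI_h)w\|_{L^1(E)}\le c h_E\|w'\|_{L^1(E)}$ for $w\in W^{1,1}(E)$, and apply it with $w=|u_h|_\veps$, whose tangential derivative is bounded by $|\nabla u_h|$ pointwise. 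Summing over boundary edges gives $\le c h\|\nabla u_h\|_{L^1(\pO)}\le c h\,|\pO|^{1/2}\|\nabla u_h\|_{L^2(\pO)}$. On a general quasiuniform mesh the inverse estimate $\|\nabla u_h\|_{L^2(\pO)}\le c h^{-1/2}\|\nabla u_h\|_{L^2(\O_\pO)}\le c h^{-1/2}\|u_h\|_{H^1}$ (with $\O_\pO$ the layer of boundary elements, and using that $u_h$ is piecewise affine so a discrete trace inequality on the boundary layer costs $h^{-1/2}$) yields the net rate $h\cdot h^{-1/2}=h^{1/2}$, i.e. $\a=1/2$. If instead $\|\nabla u_h\|_{L^2(\pO)}\le c$ uniformly — the stated hypothesis — the inverse estimate is not needed and one keeps the full $\cO(h)$, giving $\a=1$.

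\medskip

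\textbf{Main obstacle.} The only real delicacy, and the point I would be most careful about, is Step 2's edge-interpolation estimate for the merely Lipschitz integrand $|u_h|_\veps$ and the correct bookkeeping of the inverse estimate on the boundary layer that produces the $h^{1/2}$ loss in the general quasiuniform case. Everything else is routine manipulation of interpolation and trace inequalities; the clean split into "regularization error $\le\veps$" plus "quadrature error $\le h^\a$" is what makes the combined rate $\veps+h^\a$ fall out. I would also double-check at the start of Step 1 exactly which functional values are being compared (constrained vs.\ unconstrained), since $J_m(u_h)$ for $u_h\in\cS^1(\cT_h)$ is not automatically $\ge\l_m$ unless $\|u_h\|=1$; the intended reading is almost certainly that $u$ is a fixed minimizer and $J_m$ is evaluated directly, with the normalization handled separately in the numerical scheme via the rescaling noted in Remark (iii).
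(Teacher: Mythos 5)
Your argument matches the paper's proof in all essentials: part (i) via the interpolant of $u$ with standard $H^2$ interpolation and trace estimates, and part (ii) via the split into a regularization error of order $\veps$ (from $0\le|a|_\veps-|a|\le\veps$) plus a boundary quadrature error for the Lipschitz function $|u_h|_\veps$, estimated by $c h\|\nabla u_h\|_{L^2(\pO)}$ and combined with the inverse trace inequality $\|\phi_h\|_{L^2(\pO)}\le c h^{-1/2}\|\phi_h\|_{L^2(\O)}$ to yield $\a=1/2$ in general and $\a=1$ under the uniform boundary $H^1$ bound. The one point you hedge on, the normalization in the first estimate, is resolved in the paper exactly as your parenthetical suggests: the comparison function is the normalized interpolant $\tu_h=\cI_h u/\|\cI_h u\|$, well defined for small $h$ since $\|u-\cI_h u\|=\cO(h^2)$, so the minimum is over the constrained set and the lower bound is immediate.
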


\begin{proof}
(i) We define $\tu_h = \cI_h u / \|\cI_h u\|$ 
which is well defined for $h$ sufficiently small since 
$\|u - \cI_h u \| = \cO(h^2)$. We have that
\[
\|u-\tu_h\| + h \|\nabla [u-\tu_h]\| \le c_\cI h^2 \|D^2 u \|. 
\]
With the continuity properties of the trace operator we deduce that 
\[\begin{split}
& J_m(\tu_h) - J_m(u) \\
& \le\big(\nabla [\tu_h+u],\nabla [\tu_h-u]\big) 
 + \frac1m \big(\|\tu_h\|_{L^1(\pO)} + \|u\|_{L^1(\pO)}\big)
\|\tu_h - u \|_{L^1(\pO)} \\
&\le c h \|u\|_{H^2(\O)}^2,
\end{split}\]
which implies the first estimate. \\
(ii) Noting that $0 \le |a|_\veps - |a| \le \veps $ it follows that
\[\begin{split}
J_{m,\veps}(u_h) - J_m(u_h) 
&= \frac1m \big(\|u_h\|_{L^1_\veps(\pO)} + \|u_h\|_{L^1(\pO)} \big)
\big(\|u_h\|_{L^1_\veps(\pO)} - \|u_h\|_{L^1(\pO)}\big) \\
&\le c (\|u_h\|_{H^1(\O)} + 1) \veps . 
\end{split}\]
We further note that we have
\[
\int_\pO \big| |u_h|_\veps - \cI_h |u_h|_\veps \big|\dv{s} 
\le c h \|\nabla |u_h|_\veps\|_{L^2(\pO)}
\le c h \|\nabla u_h\|_{L^2(\pO)}.
\]
Using that for elementwise polynomial functions $\phi_h \in L^\infty(\O)$
we have
\[
\|\phi_h\|_{L^2(\pO)}\le c h^{-1/2} \|\phi_h \|_{L^2(\O)}
\]
implies that 
\[\begin{split}
\big|J_{m,\veps,h}& (u_h) - J_{m,\veps}(u_h)\big| \\
&= \frac1m \big(\|u_h\|_{L^1_{\veps,h}(\pO)} + \|u_h\|_{L^1_\veps(\pO)}\big)
\big|\|u_h\|_{L^1_{\veps,h}(\pO)}- \|u_h\|_{L^1_\veps(\pO)}\big| \\
&\le c \big(\|\nabla u_h\|+1\big) h^{1/2} \|\nabla u_h\|,
\end{split}\]
which proves the second estimate.
\end{proof}

The estimates of the proposition imply the $\Gamma$-convergence of 
the functionals $J_{m,\veps,h}$ to $J_m$ as $(\veps,h)\to 0$. We 
formally extend the discrete functionals $J_{m,\veps,h}$ by the 
value $+\infty$ in $H^1(\O)\setminus \cS^1(\cT_h)$. Note that the
functional $J_m$ is continuous, convex, and coercive
on $H^1(\O)$. 

\begin{corollary} 
The functionals $J_{m,\veps,h}$ converge to $J_m$ as $(\veps,h)\to 0$
in the sense of $\G$-convergence with respect to weak convergence in $H^1(\O)$. 
\end{corollary}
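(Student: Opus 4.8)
The plan is to verify directly the two defining conditions of $\G$-convergence with respect to weak convergence in $H^1(\O)$, for an arbitrary sequence of parameters $(\veps_n,h_n)\to(0,0)$: the lower bound inequality $\liminf_n J_{m,\veps_n,h_n}(u_n)\ge J_m(u)$ whenever $u_n\wto u$ in $H^1(\O)$, and the existence, for every $u\in H^1(\O)$, of a recovery sequence $u_n\wto u$ with $\limsup_n J_{m,\veps_n,h_n}(u_n)\le J_m(u)$. The two facts used throughout are Proposition~\ref{prop:spatial_appr}, whose second estimate controls $|J_{m,\veps,h}(u_h)-J_m(u_h)|$ by $c(\|u_h\|_{H^1(\O)}+1)^2(\veps+h^{1/2})$ on $H^1$-bounded sets (under the quasiuniformity assumption made there), and the observation that the convex, continuous functional $J_m$ is weakly sequentially lower semicontinuous on $H^1(\O)$.

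For the lower bound, let $u_n\wto u$ in $H^1(\O)$. We may assume $L:=\liminf_n J_{m,\veps_n,h_n}(u_n)<\infty$ and, passing to a subsequence, that $J_{m,\veps_n,h_n}(u_n)\to L$; then $u_n\in\cS^1(\cT_{h_n})$ for large $n$, and since $\|\nabla u_n\|^2\le J_{m,\veps_n,h_n}(u_n)$ while $\|u_n\|_{L^2(\O)}$ is bounded by weak convergence, the norms $\|u_n\|_{H^1(\O)}$ are bounded. Proposition~\ref{prop:spatial_appr} then gives $|J_{m,\veps_n,h_n}(u_n)-J_m(u_n)|\to 0$, so $J_m(u_n)\to L$, whence $J_m(u)\le\liminf_n J_m(u_n)=L$ by weak lower semicontinuity of $J_m$.

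For the recovery sequence, fix $u\in H^1(\O)$. Using density of $H^2(\O)$ in $H^1(\O)$ (via $C^\infty(\oO)$, the domain being Lipschitz) and continuity of $J_m$, choose for each $\d>0$ a function $u^\d\in H^2(\O)$ with $\|u-u^\d\|_{H^1(\O)}\le\d$ and $|J_m(u)-J_m(u^\d)|\le\d$. For such $u^\d$ the nodal interpolants $u_n^\d:=\cI_{h_n}u^\d$ are well defined (for $d\le 3$), converge strongly to $u^\d$ in $H^1(\O)$ with uniformly bounded $H^1$-norm, so $J_m(u_n^\d)\to J_m(u^\d)$ by continuity of $J_m$, and Proposition~\ref{prop:spatial_appr} then yields $J_{m,\veps_n,h_n}(u_n^\d)\to J_m(u^\d)$ as $n\to\infty$. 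A standard diagonal argument produces $\d(n)\to 0$ such that $u_n^{\d(n)}\to u$ strongly, hence weakly, in $H^1(\O)$ and $\limsup_n J_{m,\veps_n,h_n}(u_n^{\d(n)})\le J_m(u)$. Together with the lower bound this identifies the $\G$-limit, and it equals $J_m$ — not its relaxation — precisely because $J_m$ is already weakly lower semicontinuous.

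I expect the main obstacle to be organizing the recovery-sequence argument: one must first reduce to $H^2$-data in order to be allowed to invoke the interpolation estimates behind Proposition~\ref{prop:spatial_appr}, then send $h_n\to 0$ with $u^\d$ fixed, and finally let $\d\to 0$, while making sure the $H^1$-bounds entering that proposition stay uniform so that the two-parameter diagonalization is legitimate. The lower bound is comparatively routine once the uniform closeness estimate and the weak lower semicontinuity of $J_m$ are in place.
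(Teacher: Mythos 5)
Your proposal is correct and follows essentially the same route as the paper: the liminf inequality via weak lower semicontinuity of $J_m$ combined with the uniform closeness estimate $|J_{m,\veps,h}(u_h)-J_m(u_h)|\to 0$ from Proposition~\ref{prop:spatial_appr}, and the recovery sequence via strong $H^1$-approximation by finite element functions together with the continuity of $J_m$. The only difference is cosmetic: you make the paper's appeal to density of the spaces $\cS^1(\cT_h)$ explicit through $H^2$-smoothing, nodal interpolation, and a diagonal argument, which adds rigor but does not change the argument.
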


\begin{proof}
(i) Let $(u_h)_{h>0}$ be a sequence of finite element functions with
$u_h \wto u$ in $H^1(\O)$. The weak lower semicontinuity of $J_m$ yields that
\[
J_m(u) \le \liminf_{h\to 0} J_m(u_h).
\]
Since by the second estimate of Proposition~\ref{prop:spatial_appr} we have
\[
J_{m,\veps,h}(u_h) - J_m(u_h) \to 0
\]
as $(\veps,h)\to 0$, we deduce that
\[
J_m(u) \le \liminf_{h\to 0} J_{m,\veps,h}(u_h). 
\]
(ii) Let $u\in H^1(\O)$ and $\d>0$. By continuity of $J_m$ there exists
$h_0>0$ such 
\[
|J_m(u)-J_m(u_h)|\le \d/2
\]
for all $u_h \in \cS^1(\cT_h)$ with $\|u-u_h\|_{H^1(\O)} \le h_0$. 
By Proposition~\ref{prop:spatial_appr} we have 
\[
|J_m(u_h)-J_{m,\veps,h}(u_h)| \le \d/2
\]
for $(\veps,h)$ sufficiently small. By density of the spaces $\cS^1(\cT_h)$
in $H^1(\O)$ we may thus select a sequence $(u_h)_{h>0}$ with $u_h \to u$ in 
$H^1(\O)$ and 
\[
J_{m,\veps,h}(u_h) \to J_m(u)
\]
as $(\veps,h)\to 0$. 
\end{proof}

\section{Numerical experiments}\label{sec:num_ex}

We illustrate the efficiency of the proposed numerical method and identify features
of optimal insulating layers via several examples.

\begin{example}[Unit disk]\label{ex:disk}
Let $d=2$, $\O=B_1(0)$, and $m=0.4$. 
\end{example}

The Neumann and Dirichlet eigenvalues for the Laplace operator
on the unit disk coincide with certain squares of roots of Bessel functions 
and are given by 
\[
\l_D \approx 5.8503, \quad \l_N \approx 3.3979.
\]
We initialized Algorithm~\ref{alg:semiimpl} with random functions
on approximate triangulations of $\O= B_1(0)$. Table~\ref{tab:exp_disk}
displays the number of nodes and triangles in $\cT_h$, the number of 
iterations $K$ needed to satisfy the stopping criterion
\[
\|d_t u_h^K\|_* \le \veps_{\rm stop},
\]
and the approximations
\[
\l_{m,\veps,h} = \frac{J_{m,\veps,h}(u_h^K)}{\|u_h^K\|^2}
\]
with the final iterate $u_h^K$. 
We used a lumped $L^2$ inner product to define the evolution metric
$(\cdot,\cdot)_*$. The regularization parameter, the step size, 
and the stopping criterion were defined via 
\[
\veps = h/10, \quad \tau = 1, \quad \veps_{\rm stop} = h/10.
\]
Plots of the numerical solutions in Example~\ref{ex:disk}
on triangulations with~1024 and~4096 triangles are shown in 
Figure~\ref{fig:ex_disk}. The break of symmetry becomes appearant 
and is stable in the sense that it does not change with the 
discretization parameters. 
From the numbers in Table~\ref{tab:exp_disk}
we see that the iteration numbers grow slower than linearly 
with the inverse of the mesh size
and that the approximations of $\l_m$ converge without a significant
preasymptotic range. For a comparison we computed the eigenvalue 
of the operator $\cA_\thick$ with constant function 
$\thick(s) = m/|\p\O|$ and obtained the value 
$\l_{m,\thick} \approx 5.095$ for $m=0.4$, i.e., the nonuniform
distribution of insulating material reduces the eigenvalue by
approximately~$0.5\%$.

\begin{figure}[htb]
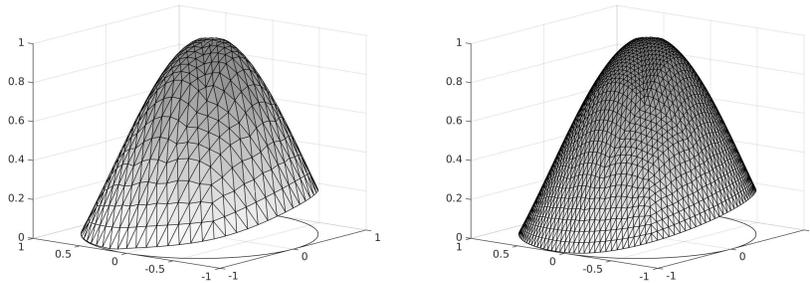

\begin{center}
\includegraphics[width=.45\linewidth]{{{disk_red_4_mass_0.4}}} 
\includegraphics[width=.45\linewidth]{{{disk_red_5_mass_0.4}}}
\end{center} \vspace*{-.5cm}
\caption{\label{fig:ex_disk}
Eigenfunctions for different triangulations in Example~\ref{ex:disk}.} 
\end{figure}

\sisetup{round-precision=6,round-mode=places,scientific-notation=true}
\begin{table}[htb]
\csvreader[tabular=c|r|r|r|r,head=false,
 table head= $h\quad$ & $\# \cN_h$ & $\# \cT_h$ & $K$ & $\l_{m,\veps,h}$ \\\hline,late after line=\\]
 {exp_disk.csv}{}
 {$2^{-\csvcoli}$ & \csvcolii & \csvcoliii & \csvcoliv & \num{\csvcolv}}
\vspace*{2mm}
\caption{\label{tab:exp_disk} Iteration numbers and discrete eigenvalues
in Example~\ref{ex:disk}.}\vspace*{-5mm}
\end{table}

\begin{example}[Unit square]\label{ex:square}
Let $d=2$, $\O= (0,1)^2$, and $m=0.1$.
\end{example}

On the unit square we have
\[
\l_D = 2\pi^2, \quad \l_N = \pi^2,
\]
and the qualitative properties of optimal insulations differ significantly
from those for the unit disk. Figure~\ref{fig:ex_square} 
displays numerical solutions on triangulations of $\O=(0,1)^2$
with 289 and 1089 triangles in Example~\ref{ex:square}. We observe
that the computed solutions reflect the symmetry properties of
the domain but also correspond to a nonuniform distribution of
insulating material. In this experiment significantly smaller 
iteration numbers are observed which appear to be related to 
the symmetry and corresponding uniqueness properties of solutions. The fact that the computed eigenvalues shown in Table~\ref{tab:exp_square}
may increase for enlarged spaces is due to the use of the mesh-dependent 
regularization and quadrature. 

\begin{figure}[htb]
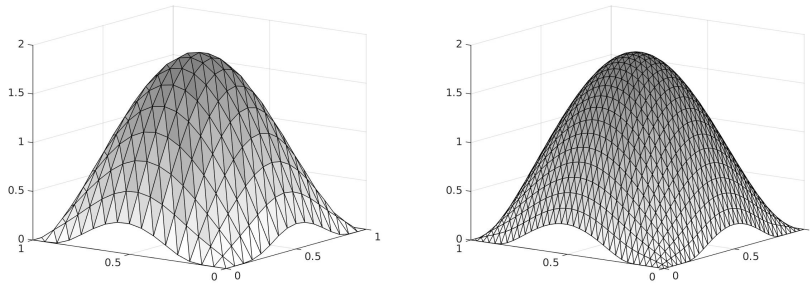

\begin{center}
\includegraphics[width=.45\linewidth]{{{square_red_4_mass_0.1}}} 
\includegraphics[width=.45\linewidth]{{{square_red_5_mass_0.1}}} 
\end{center} \vspace*{-.5cm}
\caption{\label{fig:ex_square}
Eigenfunctions for different triangulations in Example~\ref{ex:square}.} 
\end{figure}

\sisetup{round-precision=6,round-mode=places,scientific-notation=false}
\begin{table}[htb]
\csvreader[tabular=c|r|r|r|r,head=false,
 table head= $h\quad$ & $\# \cN_h$ & $\# \cT_h$ & $K$ & $\l_{m,\veps,h}$ \\\hline,late after line=\\]
 {exp_square.csv}{}
 {$2^{-\csvcoli}$ & \csvcolii & \csvcoliii & \csvcoliv & \num{\csvcolv}}
\vspace*{2mm}
\caption{\label{tab:exp_square} Iteration numbers and discrete eigenvalues
in Example~\ref{ex:square}.}\vspace*{-5mm}
\end{table}

\begin{example}[Unit ball]\label{ex:ball}
Let $d=3$, $\O= B_1(0)$, and $m=5.0$.
\end{example}

The effect of a nonuniform insulating layer is slightly stronger in 
three-dimensional
situations. For the setting of Example~\ref{ex:ball} and two different
triangulations we obtained the
distributions shown in Figure~\ref{fig:ex_ball}. As in two space dimensions
the insulation leaves a connected gap which is here approximately circular.
The thickness continuously increases to a maximal value that is attained 
at a point on the boundary which is opposite to the gap of the insulation. 
Note that the position of the gap is arbitrary and 
depends on the initial data and the discretization parameters. For a
uniform distribution of the insulation material we obtain the Robin
eigenvalue $\l_R = 4.7424$ so that the nonuniform distribution reduces
the slightly larger limiting value of Table~\ref{tab:exp_ball} by approximately~1\%.

\begin{figure}[htb]
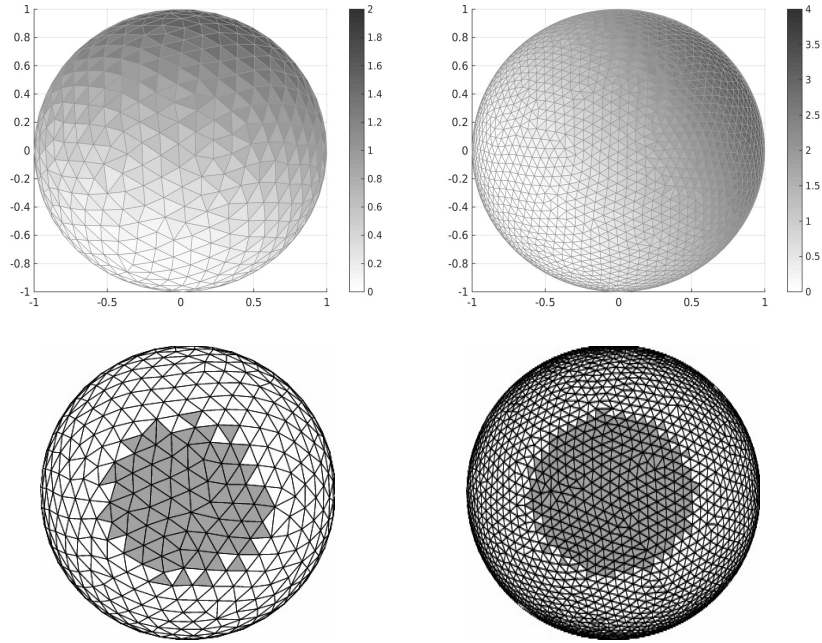

\begin{center}
\includegraphics[width=.45\linewidth,height=4.6cm]{{{ball_red_3_mass_5.0}}} 
\includegraphics[width=.45\linewidth,height=4.6cm]{{{ball_red_4_mass_5.0}}}\\[2mm]
\hspace*{-5mm}\includegraphics[width=3.9cm,height=3.9cm]{{{gap_ball_red_3_mass_5.0}}} \hspace*{1.5cm}
\includegraphics[width=3.9cm,height=3.9cm]{{{gap_ball_red_4_mass_5.0}}}
\end{center}
\caption{\label{fig:ex_ball}
Eigenfunctions for different triangulations (top) and approximate indicator 
functions of the insulation gaps after rotation (bottom) in Example~\ref{ex:ball}.} 
\end{figure}

\sisetup{round-precision=6,round-mode=places,scientific-notation=false}
\begin{table}[htb]
\csvreader[tabular=c|r|r|r|r,head=false,
 table head= $h\quad$ & $\# \cN_h$ & $\# \cT_h$ & $K$ & $\l_{m,\veps,h}$ \\\hline,late after line=\\]
 {exp_ball.csv}{}
 {$2^{-\csvcoli}$ & \csvcolii & \csvcoliii & \csvcoliv & \num{\csvcolv}}
\vspace*{2mm}
\caption{\label{tab:exp_ball} Iteration numbers and discrete eigenvalues
in Example~\ref{ex:ball}.}\vspace*{-5mm}
\end{table}

\section{Shape variations}\label{sec:shape_opt}

\subsection{Shape optimization}

The insulation properties of a conducting body can further be improved
by modifying its shape keeping its volume fixed. Taking perturbations of 
the domain gives rise
to a shape derivative of the eigenvalue $\l_m$ regarded as a function
of the domain $\O$, i.e., for a vector field $w:\O \to \R^d$ and a number
$s\in \R$ we consider the perturbed domain $(\id+s w)(\O)$ and define
\[
\d \l_m(\O)[w] = 
\lim_{s\to 0} \frac{\l_m\big((\id + sw)(\O)\big) - \l_m\big(\O\big)}{s}.
\]
It follows from, e.g.,~\cite{HenPie05-book,BuBuNi16}, that with the outer
unit normal $n$ on $\pO$ we have
\[
\d \l_m(\O)[w] = \int_\pO j_m(u) w \cdot n \dv{s},
\] 
where $j_m(u)$ is for a sufficiently regular,
nonnegative eigenfunction $u\in H^1(\O)$ 
corresponding $\l_m$ and the mean curvature $H$ on $\pO$, normalized
so that $H=d-1$ for the unit sphere, given by
\[
j_m(u) = |\nabla u|^2 - 2 |\p_n u|^2 - \l_m u^2 + \frac{2}{m} \|u\|_{L^1(\pO)}H u. 
\]
To preserve the volume of $\O$ we restrict to divergence-free vector 
fields and compute a representative $v= \nabla_{\rm St} \l_m (\O) 
\in H^1(\O;\R^d)$ of $\d\l_m(\O)$
via the Stokes problem
\[\begin{cases}
(v,w) + (\nabla v,\nabla w) + (p,\diver w) = \d\l_m(\O)[w],\\
(q,\diver v) = 0,
\end{cases}\]
for all $w\in H^1(\O;\R^d)$ and $q\in L^2(\O)$. Since $\d \l_m(\O)[w]$ only depends on
the normal component $w\cdot n$ on $\pO$ it follows that the tangential 
component of the solution
$v\in H^1(\O;\R^d)$ vanishes. To optimize $\l_m$ with respect to shape
relative to a reference domain $\O\subset \R^d$ we evolve the domain
by the negative shape gradient, i.e., beginning with $\O_0 = \O$ we
define a sequence of domains $(\O_k)_{k=0,1,\dots}$ via
\[
\O_{k+1} = (\id + \tau_k v_k) (\O_k), \quad v^k = -\nabla_{\rm St} \l_m(\O_k),
\]
with positive step sizes $\tau_k>0$. Starting
from a maximal initial step size $\tau_{\rm max}$, we decreased the step size $\tau_k$
in the $k$-th step of the gradient descent until a relative decrease of the objective
below $0<\theta<1$ is achieved. The new step size is then defined
by $\tau_{k+1} = \min\{\tau_{\rm max},2\tau_k\}$; we stop the iteration if
$\tau_{k+1}\le \veps_{\rm stop}'$. 

\begin{figure}[p]
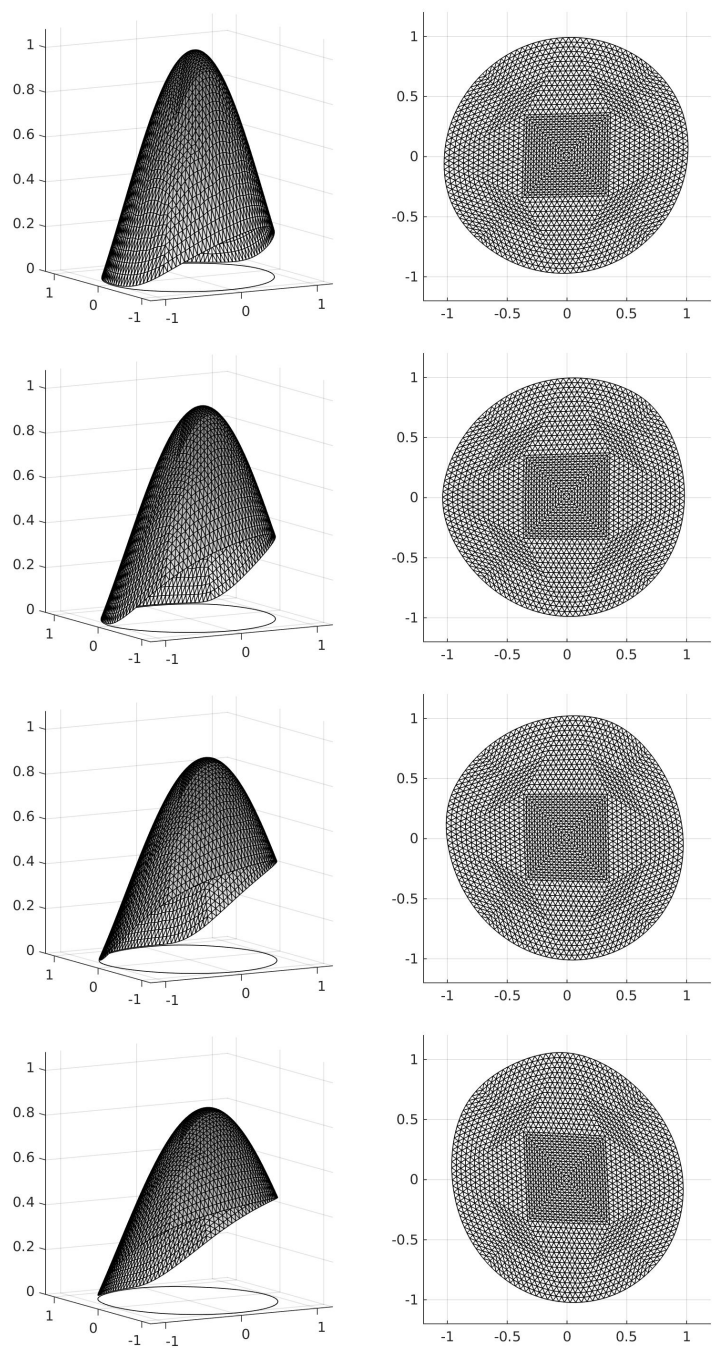

\begin{center}
\includegraphics[width=.9\linewidth,height=4.7cm]{{{shape_opt_2d_m_0.4}}} \\[-2mm]
\includegraphics[width=.9\linewidth,height=4.7cm]{{{shape_opt_2d_m_0.9}}} \\[-2mm]
\includegraphics[width=.9\linewidth,height=4.7cm]{{{shape_opt_2d_m_1.4}}} \\[-2mm]
\includegraphics[width=.9\linewidth,height=4.7cm]{{{shape_opt_2d_m_1.9}}} 
\end{center}
\caption{\label{fig:ex_opt_disk}
Eigenfunctions $u_m$ and corresponding optimized two-dimensional shapes $\O^*$ obtained from the unit disk $\O=B_1(0)$ with $m=0.4,\, 0.9,\, 1.4,\, 1.9$ (top to bottom).} 
\end{figure}

\subsection{Optimization from the unit disk}
Starting from the unit disk and using different values~$m$ for the 
available insulation mass we carried out a shape gradient descent iteration using
a discretization of the Stokes problem with a nonconforming Crouzeix--Raviart
method. The numerical solutions $v_h^k$ were projected onto conforming
$P1$ finite element vector fields before the triangulation of the current
domain was deformed. 
Figure~\ref{fig:ex_opt_disk} shows the computed nearly stationary shapes
for $m=0.4, 0.9, 1.4, 1.9$ along with eigenfunctions $u_m$. We see that
the boundary is flatter along the parts which are not insulated and
that the domains are convex with one axis of symmetry. The reduction of the eigenvalues
via shape optimization is rather small as is documented in 
Table~\ref{tab:shape_opt} in which the eigenvalues 
$\l_R^{{\rm uni}}(\O)$, $\l_{m,h}(\O)$, and $\l_{m,h}(\O^*)$
for the uniform and
nonuniform insulation of the unit disk and the optimized domains $\O^*$,
respectively, are displayed. 

\begin{table}[htb]
\begin{tabular}{l|cccc}
$m$ & 0.4 & 0.9 & 1.4 & 1.9 \\\hline
$\l_R^{{\rm uni}}(\O)$ & 5.0951 & 4.3803 & 3.8085 & 3.3519\\
$\l_{m,h}(\O)$ & 5.0714 & 4.3383 & 3.7819 & 3.3503\\
$\l_{m,h}(\O^*)$ & 5.0664 & 4.3296 & 3.7718 & 3.3378\\[2mm]
\end{tabular}
\caption{\label{tab:shape_opt} Decrease of the nonlinear eigenvalues 
via shape optimization for different total masses and comparison to uniform
insulations on the unit disk $\O=B_1(0)$.}
\end{table}

\begin{figure}[p]
\begin{center}
\input{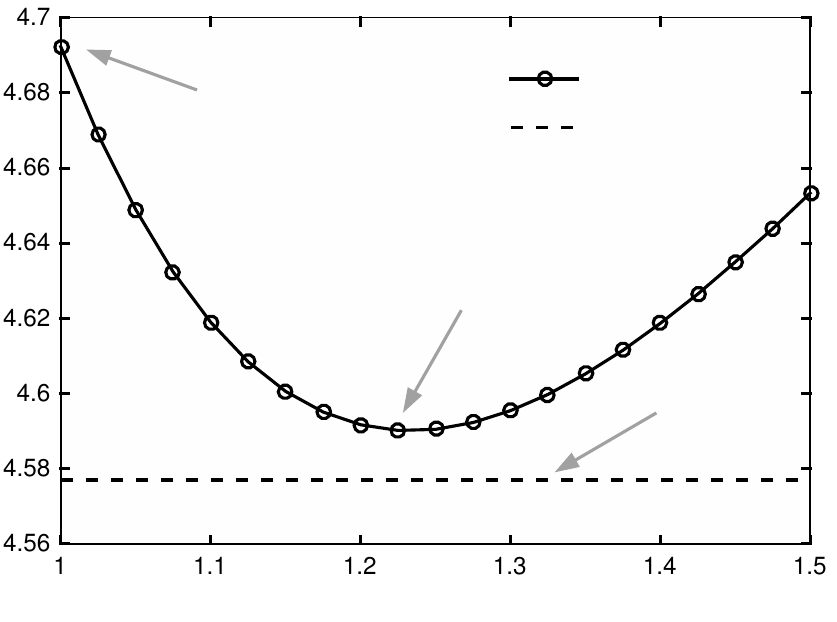_t}
\end{center} \vspace*{-3mm}
\caption{\label{fig:ex_ellipsoids}
Eigenvalues $\l_m(\O_a)$ for different symmetric ellipsoids and optimal 
assembled half-ellipsoids $\l_m(\O_{ab}^*)$ and references to corresponding
shapes shown in Figure~\ref{fig:ex_ellipsoids_profile}.} \vspace*{-3mm}
\end{figure}
\begin{figure}[p]
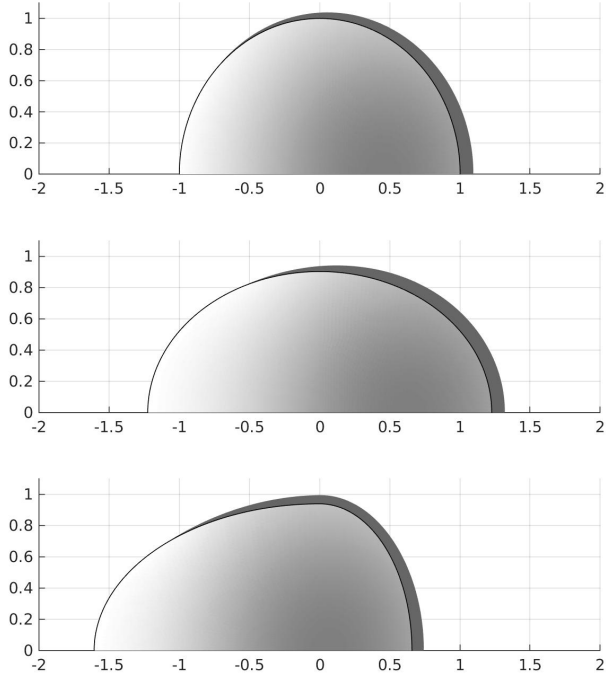

\begin{center}
\includegraphics[width=.76\linewidth]{{{optimal_sym_ellipsoid_thickness_mass_5.0_red_6}}} 
\end{center} \vspace*{-9mm}
\caption{\label{fig:ex_ellipsoids_profile}
Profile, eigenfunction (gray shading), and boundary film (scaled by $\veps = 1/10$)
for the unit ball $\O_1$ (top), 
the optimal ellipsoid $\O_a$ with $a=1.225$ (middle), and the optimal assembled half-ellipsoids
$\O^*_{ab}$ with $a = 1.607$ and $b = 0.657$ (bottom).} 
\end{figure}

\subsection{Three-dimensional rotational shapes}

The optimization of $\l_m(\O)$ among domains $\O\subset\R^d$ is ill-posed when $d\ge 3$
as explained in the introduction. 
%
The same phenomenon is obtained by taking $\O=B_{r_1}\cup B_{r_2}$ the union of two disjoint
balls with radii $r_1$ and $r_2$ and
\[u=
\begin{cases}
1&\hbox{on }B_{r_1}\\
0&\hbox{on }B_{r_2}
\end{cases}\]
which gives
\[
\l_m(\O)\le\frac{\frac1m(|\partial B_{r_1}|)^2}{|B_{r_1}|}=\frac{d^2\omega_d}{m}r_1^{d-2}\;.
\]
Again, $\l_m(\O)$ can be made arbitrarily small, keeping the measure of $\O$ 
fixed and letting $r_1\to0$.

Instabilities in 
numerical experiments confirm the general ill-posedness in the three-dimen\-si\-onal setting. It is expected that optimal shapes exist among convex bodies of fixed volume and we therefore carried out a
one-dimensional optimization among ellipsoids 
\[
\O_a = \text{ellipsoid with radii $(a,r_a,r_a)$}.
\]
The radii $r_a$ are chosen such that $|\O_a|= c_0$. 
Letting $c_0 = |B_1(0)|$ and $m=5.0$ be the volume of the unit ball
and total mass we plotted in 
Figure~\ref{fig:ex_ellipsoids} the values $\l_m(\O_a)$ as a function 
of $a\in [1,1.5]$. For numerical efficiency we exploited the rotational
symmetry of the domains and discretized the dimensionally reduced 
setting. We obtain an optimal value for the radius $a \approx 1.225$. The profile
of this ellipsoid and a corresponding eigenfunction $u_m$ are displayed
in the top and middle plot of Figure~\ref{fig:ex_ellipsoids_profile}. We further optimized the
eigenvalue within a larger class of rotational bodies defined as 
assembled half-ellipsoids, i.e., by considering 
\[
\overline{\O}_{ab} = \big(\overline{\O}_a \cap \{x_1 \le 0\}\big) \cup
\big(\overline{\O}_b \cap \{x_1 \ge 0\}\big),
\]
and adjusting the radii $r_a = r_b$ such that $|\O_{ab}| = c_0$. 
Optimizing among the radii $(a,b)$ we find the optimal shape shown 
in the bottom plot of Figure~\ref{fig:ex_ellipsoids_profile}. The corresponding 
discrete eigenfunction is visualized by the gray shading and suggests
a gap in the insulation at the pointed and a thicker insulation on 
the blunt end on the surface of the egg-like optimal domain. 

\medskip
{\em Acknowledgments:} S.B. acknowledges hospitality of the Hausdorff Research Institute for Mathematics within the trimester program {\em Multiscale Problems: Algorithms, Numerical Analysis and Computation} and support by 
the DFG via the priority program {\em Non-smooth and Complementarity-based Distributed Parameter Systems: Simulation and Hierarchical Optimization} (SPP 1962). G.B. is member of the Gruppo Nazionale per l'Analisi Matematica, la Probabilit\`a e le loro Applicazioni (GNAMPA) of the Istituto Nazio\-nale di Alta Matematica (INdAM); his work is part of the project 2015PA5MP7 {\it``Calcolo delle Variazioni''} funded by the Italian Ministry of Research and University. 

\bibliographystyle{amsalpha}
\bibliography{refs}

\bigskip
{\small\noindent
S\"oren Bartels:
Abteilung f\"ur Angewandte Mathematik, Universit\"at Freiburg\\
Hermann-Herder-Str. 10, 79104 Freiburg im Breisgau - GERMANY\\
{\tt bartels@mathematik.uni-freiburg.de}\\
{\tt https://aam.uni-freiburg.de/bartels}

\bigskip\noindent

Giuseppe Buttazzo:
Dipartimento di Matematica, Universit\`a di Pisa\\
Largo B. Pontecorvo 5, 56127 Pisa - ITALY\\
{\tt buttazzo@dm.unipi.it}\\
{\tt http://www.dm.unipi.it/pages/buttazzo/}

\end{document}